\documentclass[12pt]{amsart}
\textwidth=13.5cm
  \textheight=20cm   
  \hoffset=-1cm
\usepackage{amsfonts}  
\usepackage{mathrsfs}
\usepackage{pictex, color,hyperref}

\newif\ifpdf
	\ifx\pdfoutput\undefined
	\pdffalse 
	\else
	\pdfoutput=1 
	\pdftrue
	\fi

	\ifpdf
	\usepackage[pdftex]{graphicx}
	\else
	\usepackage{graphicx}
	\fi

\theoremstyle{plain}
\newtheorem{Thm}{Theorem}
 
\newtheorem{Lem}{Lemma}
\newtheorem{Cor}{Corollary}
\theoremstyle{definition}
\newtheorem{Def}{Definition}
\newtheorem{Rmk}{Remark}

\def\eps{{\varepsilon}}


\begin{document}

\title{Diophantine approximation on Veech surfaces}

\author{Pascal Hubert}
\address{LATP, case cour A, Facult\'e des sciences Saint J\'er\^ome \\
Avenue Escadrille Normandie Niemen \\
13397 Marseille cedex 20, France}
\email{hubert@cmi.univ-mrs.fr}
\author{Thomas A. Schmidt}
\address{Department of Mathematics\\Oregon State University\\Corvallis, OR 97331, USA}
\email{toms@math.orst.edu}
\thanks{The first author is partially supported by Projet blanc 
ANR: ANR-06-BLAN-0038.   Both authors thank the Hausdorff Research Institute for Mathematics, and  the organizers of the special trimester on the geometry and dynamics of Teichm\"uller space for providing a very stimulating atmosphere.} 
\keywords{translation surfaces,   transcendence, Diophantine approximation}
\subjclass[2000]{11J70, 11J81, 30F60}
\date{9 October 2010}

\setcounter{tocdepth}{2}

\baselineskip=17pt

\begin{abstract}
We  show that Y. Cheung's general $Z$-continued fractions  can be adapted to give approximation by saddle connection vectors  for any compact translation surface.      That is,  we show the finiteness of  his Minkowski constant for any compact  translation surface.    Furthermore,  we show that for a Veech surface in standard form,  each component of any saddle connection vector dominates its conjugates.    The saddle connection continued fractions then allow one to recognize certain transcendental directions by their developments. 
\end{abstract}


\maketitle

\section{Introduction and Main Results}   

We show that Yitwah Cheung's generalization of the geometric interpretation of regular continued fractions gives a successful method for approximation of flow directions on translation surfaces by saddle connection vectors.    Cheung \cite{C}, \cite{CHM} generalizes the work of  Poincar\'e and Klein  by replacing approximation by the integer lattice  in $\mathbb R^2$ with approximation by any infinite discrete set $Z$ of nonzero vectors with finite ``Minkowski constant'', equal to one-fourth times the supremum taken over the areas of centro-symmetric bounded convex bodies disjoint from $Z$.\\

We prove, as Cheung certainly understood, that the set of   saddle connection vectors of any translation surface  has a finite Minkowski constant.  

\begin{Thm}\label{t:goodMink}    
Let $S$ be a compact translation surface,  and $Z = \text{V}_{\text{sc}}(S)$ the set of saddle connection vectors of $S$.   Then 
\[\mu(Z) \le  \pi \,\text{vol}(S)\,\]
where $\text{vol}(S)$ is the Lebesgue area of $S$.   
\end{Thm}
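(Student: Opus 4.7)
The plan is to show $\text{area}(K) \le 4\pi\,\text{vol}(S)$ for every centro-symmetric bounded convex body $K \subset \mathbb{R}^2$ with $K \cap V_{\mathrm{sc}}(S) = \emptyset$; dividing by four then yields the theorem.

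First I would exploit the $SL(2,\mathbb{R})$-action: for $g\in SL(2,\mathbb{R})$ one has $V_{\mathrm{sc}}(gS) = g\,V_{\mathrm{sc}}(S)$ and $\text{vol}(gS)=\text{vol}(S)$, so nothing is lost by replacing $(S,K)$ with $(g^{-1}S,\,g^{-1}K)$. Invoking L\"owner--John in $\mathbb{R}^2$, any centro-symmetric convex body $K$ contains an ellipse $E$ with $K\subset \sqrt{2}\,E$. I would choose $g$ so that $gE$ becomes a round disk $D_r$; after this normalization $\text{area}(K)\le 2\pi r^2$, and the inscribed disk $D_r\subset gK$ is disjoint from $V_{\mathrm{sc}}(gS)$, forcing $r\le \ell_{\min}$, where $\ell_{\min}$ is the length of the shortest saddle connection of $gS$.

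The central task is then the systolic inequality $\ell_{\min}^2 \le 4\,\text{vol}(S)/\pi$. I would prove this by showing that for any regular point $x\in gS$, the open metric disk $B(x,\ell_{\min}/2)$ embeds isometrically in $gS$; its area $\pi(\ell_{\min}/2)^2$ is then bounded by $\text{vol}(gS)$. If the disk fails to embed, two distinct geodesic rays from $x$ of length less than $\ell_{\min}/2$ would meet at a common point, producing a loop $\gamma$ at $x$ of length strictly less than $\ell_{\min}$. Lifting to the universal cover and applying Gauss--Bonnet to the flat disk bounded by the lifted loop (using that cone singularities have non-positive angular defect) rules out the null-homotopic case, as such a bigon with geodesic sides is impossible. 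Hence $\gamma$ is non-trivial in $\pi_1(gS)$, and its minimal-length free-homotopy representative is either a smooth closed geodesic---which lies in a cylinder whose boundary is a saddle connection of length equal to that of the geodesic, hence strictly shorter than $\ell_{\min}$---or a concatenation of saddle connections whose total length is less than $\ell_{\min}$ while each of whose pieces is at least $\ell_{\min}$. Either outcome contradicts the definition of $\ell_{\min}$.

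Combining the two bounds yields $r^2 \le 4\,\text{vol}(S)/\pi$, so $\text{area}(K)\le 2\pi r^2 \le 8\,\text{vol}(S)\le 4\pi\,\text{vol}(S)$, giving the claimed estimate on $\mu$. The main difficulty is the systolic inequality, and within it the delicate Gauss--Bonnet step ruling out the bigon: even in the presence of interior cone points in the flat disk, one must check that the angular and curvature contributions cannot simultaneously balance. A secondary subtlety is verifying that the shortening to a geodesic representative really occurs within the free homotopy class of $\gamma$ and does genuinely produce a saddle connection or cylinder-boundary saddle connection of the advertised length.
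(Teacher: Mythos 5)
Your reduction is exactly the paper's: apply the L\"owner--John theorem to replace $K$ by an inscribed ellipse of at least half the area, use an element of $\text{SL}_2(\mathbb R)$ to make that ellipse round while transporting $\text{V}_{\text{sc}}(S)$ to $\text{V}_{\text{sc}}(gS)$, and conclude from a bound on the shortest saddle connection. Where you diverge is that the paper simply quotes Vorobets' estimate $\ell_{\min}^2 \le 2\,\text{vol}(S)$ (the proof of his Proposition 3.2), whereas you try to prove a systolic inequality from scratch --- and that is where there is a genuine gap. Your central claim, that for \emph{any} regular point $x$ the metric ball $B(x,\ell_{\min}/2)$ embeds isometrically and therefore has area exactly $\pi(\ell_{\min}/2)^2$, is false: any translation surface of genus at least $2$ has a cone point of angle strictly greater than $2\pi$, and a regular point $x$ at distance $\epsilon < \ell_{\min}/2$ from it (such points always exist) has $B(x,\ell_{\min}/2)$ containing that cone point, so the ball is not isometric to a Euclidean disk. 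Your dichotomy ``the disk embeds, or two geodesic rays from $x$ collide'' omits the third and unavoidable failure mode: a radial geodesic from $x$ runs into a singularity before reaching length $\ell_{\min}/2$. The bigon and short-essential-loop analysis that follows is fine as far as it goes, but it only addresses the collision case, so the area lower bound $\text{vol}(S) \ge \pi\ell_{\min}^2/4$ is not established as written.

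The gap is repairable without changing your architecture: either (i) base the ball at a singularity, where the cone of angle $\theta \ge 2\pi$ and radius $r$ has area $\tfrac{\theta}{2}r^2 \ge \pi r^2$ and the same bigon/short-loop arguments show it is embedded for $r = \ell_{\min}/2$; or (ii) keep a regular basepoint but replace ``isometric to a Euclidean disk'' by a nonpositive-curvature (CAT(0)) comparison giving $\text{area}\bigl(B(x,r)\bigr) \ge \pi r^2$ even when cone points of angle at least $2\pi$ lie inside an embedded ball; or (iii) do what the paper does and cite Vorobets. Note also that your constant $\ell_{\min}^2 \le 4\,\text{vol}(S)/\pi$ is sharper than the one the paper imports, so a corrected version of your argument would actually yield $\mu(Z) \le 2\,\text{vol}(S)$, better than the stated $\pi\,\text{vol}(S)$; but as submitted the key lemma is false for an arbitrary regular basepoint.
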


 The following result is of independent interest; here, it  allows us to reach transcendence results using approximation by saddle connection vectors.    Recall that the group of matrix parts (the so-called ``derivatives'') of the oriented affine diffeomorphisms of a  compact finite genus translation surface, $S$, form a Fuchsian group, $\Gamma(S)$.  The trace field of the surface is the algebraic number field generated over the rationals by the set of traces of the elements of $\Gamma(S)$, when this group is non-trivial.  When $\Gamma(S)$ is a lattice in $\text{SL}_2(\mathbb R)$, the surface is said to be a Veech surface.   
 
\begin{Thm}\label{l:holVectorEntries}    Suppose that $S$ is a Veech surface  normalized so that:   $\Gamma(S) \subset \text{SL}_2(\mathbb K)$;  the horizontal direction is periodic; and,  both components of every   saddle connection vector of $S$ lie in  $\mathbb K$, where $\mathbb K$ is the trace field of $S$.  Then there exists a positive constant $c = c(S)$ such that  for all holonomy vectors $v = \begin{pmatrix} v_1\\v_2\end{pmatrix}$, and 
 $1\le i \le 2$ one has 
\[ |\,  v_{i}\,| \ge c \,  |\, \sigma(\, v_{i})\,|\,,\]
where  $\sigma$ varies through the set of  field embeddings of $\mathbb K$ into $\mathbb R$.   
\end{Thm}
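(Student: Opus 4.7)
My plan is to reduce the claim, via Veech's dichotomy, to a finite-data comparison and then to a uniform entry-wise domination for elements of $\Gamma(S)$.  By Veech's dichotomy, every saddle-connection direction of $S$ is a parabolic-fixed direction of the Veech group, hence corresponds to a cusp of $\Gamma(S)\backslash\mathbb{H}^{2}$.  Since $\Gamma(S)$ is a lattice there are only finitely many cusp orbits; choose representatives $\theta_{1}=0$ (the horizontal, by normalization), $\theta_{2},\ldots,\theta_{k}$.  Each $\theta_{j}$ is periodic and so carries only finitely many saddle connections, whose holonomies lie in $\mathbb{K}^{2}$ by hypothesis.  Thus there is a finite set $V_{0}\subset\mathbb{K}^{2}$ such that every saddle-connection vector can be written $v=g\,v_{0}$ with $g\in\Gamma(S)$ and $v_{0}\in V_{0}$; applying $\sigma$ gives $\sigma(v)=\sigma(g)\,\sigma(v_{0})$ with $\sigma(v_{0})\in\sigma(V_{0})$ also in a finite set.

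In coordinates $v_{i}=\sum_{j}g_{ij}v_{0,j}$ and $\sigma(v_{i})=\sum_{j}\sigma(g_{ij})\sigma(v_{0,j})$.  Since $V_{0}$ is finite, the numbers $|v_{0,j}|$ and $|\sigma(v_{0,j})|$ take only finitely many values and are uniformly bounded above and (when nonzero) below.  Consequently the bound $|v_{i}|\ge c|\sigma(v_{i})|$ reduces to a \emph{uniform} entry-wise bound $|g_{ij}|\ge c'\,|\sigma(g_{ij})|$ as $g$ ranges over $\Gamma(S)$, for some $c'=c'(S)>0$; this uniform domination of matrix entries is the heart of the proof.

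To establish the entry-wise domination I would express each $g\in\Gamma(S)$ as a word in the finitely many generators of $\Gamma(S)$---these include the horizontal parabolic $P=\begin{pmatrix}1&h\\0&1\end{pmatrix}$ with $h\in\mathbb{K}$, and at least one hyperbolic pseudo-Anosov pivot $M$.  Individual generators have bounded entries with bounded Galois ratios, so the task is to propagate through products.  The crucial algebraic input should be that the top eigenvalue $\lambda$ of any hyperbolic element of $\Gamma(S)$ is Perron, dominating all its Galois conjugates in absolute value, so that matrix entries of $M^{n}$ grow like $\lambda^{n}$ while their $\sigma$-conjugates grow at rate at most $\lambda^{n}$; combined with the linear, rate-commensurate growth of the parabolic $P$ in both $\Gamma(S)$ and $\sigma(\Gamma(S))$, this should yield the uniform entry-wise domination.

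The main obstacle I foresee is making this multiplicative propagation quantitative and uniform in word length, particularly since for non-arithmetic Veech surfaces $\sigma(\Gamma(S))$ is dense in $\text{SL}_{2}(\mathbb{R})$ and need not be discrete, so a cancellation within a single product could temporarily spoil entry domination.  A cleaner alternative is to bypass word-length induction altogether by restricting $g$ to a fixed Dirichlet fundamental domain for the $\Gamma(S)$-action on flat structures; this bounds the singular values of $g$ a priori, one then compares the resulting bounded behaviour of $g$ and $\sigma(g)$ directly, and derives the uniform constant $c'$ from the explicit finite set $V_{0}$.
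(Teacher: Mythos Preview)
Your overall architecture matches the paper's: reduce to a finite set $V_{0}$ of saddle-connection vectors in representative cusp directions, write an arbitrary holonomy vector as $v=g\,v_{0}$ with $g\in\Gamma(S)$, and try to control Galois ratios of the entries of $g$. But two genuine gaps remain.

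First, the reduction step ``$|v_{i}|\ge c|\sigma(v_{i})|$ follows from $|g_{ij}|\ge c'|\sigma(g_{ij})|$'' is not valid as stated. Writing $v_{i}=g_{i1}v_{0,1}+g_{i2}v_{0,2}$, cancellation between the two summands can make $|v_{i}|$ small while $|\sigma(v_{i})|$ stays large; termwise domination of the $g_{ij}$ gives no lower bound on a signed sum. The paper avoids this by never expanding $v$ that way. Instead, for the parabolic $P$ fixing $v_{0}$ one has $P^{n}e_{1}=e_{1}+n\beta v_{0}$ for a fixed $\beta\in\mathbb{K}$, hence $AP^{n}e_{1}=Ae_{1}+n\beta v$; the left side is the \emph{first column} of an element of $\Gamma(S)$, so entry-wise domination for that single column, divided by $n$ and sent to infinity, yields domination for $\beta v_{i}$ and hence for $v_{i}$. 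No two-term cancellation enters.

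Second, you have not actually established the entry-wise domination $|g_{ij}|\ge c'|\sigma(g_{ij})|$. Word-length induction is exactly where the cancellation you worry about bites, and the ``fundamental domain'' alternative does not apply: $g$ is essentially forced by $v$ and can be arbitrarily far from the identity, so you cannot restrict it to a bounded set. The paper's argument (its Lemma on entries) is short and clean: the trace of $P^{n}A$ equals $a_{11}+a_{22}+n\lambda a_{21}$; since traces of hyperbolic elements dominate their Galois conjugates (a consequence of the Perron property of pseudo-Anosov dilatations), dividing by $n$ and letting $n\to\infty$ gives $|\lambda a_{21}|\ge|\sigma(\lambda a_{21})|$, hence $|a_{21}|\ge c_{1}|\sigma(a_{21})|$; similarly for $a_{12}$ via $AP^{n}$, and then for $a_{11},a_{22}$ by iterating. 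The missing idea in your proposal is precisely this ``multiply by a large parabolic power and read off the leading coefficient'' trick, used both to pass from trace domination to entry domination and to pass from entry domination to domination of the components of $v$.
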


Note that in the above,  all field embedding $\sigma$ in fact take values only in $\mathbb R$.

With $S$ as above and $Z = \text{V}_{\text{sc}}(S)$ the set of saddle connection vectors of $S$,   the $Z$-expansion of an inverse slope $\theta$ for a flow direction is defined in Section \ref{ss:theZrev}.     Theorem ~ \ref{t:goodMink}  then implies that this gives a sequence of elements $(p_n, q_n) \in \mathbb K^2$ such that    $| \theta - p_n/q_n|$ goes to zero as $n$ tends to infinity; see Lemma \ref{l:convergencePlus}.   

One criterion for a ``good'' continued fraction algorithm is that extremely rapid convergence to a real number implies that this number is transcendental.   We show that the  $Z = \text{V}_{\text{sc}}(S)$-fractions on Veech surfaces enjoy this property.

\begin{Thm}\label{t:transc}    With $S$ and $\mathbb K$ as above,  
let $D = [\mathbb K: \mathbb Q]$ be the field extension degree of $\mathbb K$ over the field of rational numbers. 
If a real number $\xi \in [0,1] \setminus \mathbb K$ has an infinite   $\text{V}_{\text{sc}}(S)$-expansion, whose
convergents $p_n/q_n$ satisfy
\[
\limsup_{n \to \infty}\,  \dfrac{\log \log q_n}{n} >  \log(\,2 D - 1\,) \,,
\]
then $\xi$ is transcendental.  
\end{Thm}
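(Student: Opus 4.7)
My plan is to prove the contrapositive: if $\xi \in [0,1]\setminus \mathbb{K}$ is algebraic of degree $e$, then $\limsup \log\log q_n / n \le \log(2D - 1)$.  The argument squeezes $|\xi - p_n/q_n|$ between a Liouville-type lower bound built on Theorem~\ref{l:holVectorEntries} and the convergence upper bound from Lemma~\ref{l:convergencePlus}, and reads off the permitted growth of $(q_n)$ from the resulting recursion.

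For the lower bound, let $P(X) = a_0 X^e + \cdots + a_e \in \mathbb{Z}[X]$ be the minimal polynomial of $\xi$ and form
\[
\alpha_n \;=\; a_0 p_n^e + a_1 p_n^{e-1} q_n + \cdots + a_e q_n^e \;=\; a_0\, q_n^e\, P(p_n/q_n).
\]
After clearing a common denominator that is bounded along the sequence, $\alpha_n$ lies in $\mathcal{O}_{\mathbb{K}}$; it is nonzero for $n$ large, since $p_n/q_n \in \mathbb{K}$ converges to $\xi \notin \mathbb{K}$ and hence eventually equals no root of $P$. Integrality forces $\prod_\sigma |\sigma(\alpha_n)| = |N_{\mathbb{K}/\mathbb{Q}}(\alpha_n)| \ge 1$. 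At the identity embedding, one factor of $P(p_n/q_n) = a_0 \prod_j(p_n/q_n - \xi_j)$ is of order $|\xi - p_n/q_n|$ while the others are bounded, so $|\alpha_n| \ll |q_n|^e\, |\xi - p_n/q_n|$. At each of the $D - 1$ non-identity real embeddings $\sigma$, Theorem~\ref{l:holVectorEntries} gives $|\sigma(p_n)|, |\sigma(q_n)| \le c^{-1}|q_n|$, whence the triangle inequality applied to $\sigma(\alpha_n)$ yields $|\sigma(\alpha_n)| \ll |q_n|^e$. Multiplying the factors produces the Liouville-type bound $|\xi - p_n/q_n| \gg |q_n|^{-eD}$.

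For the upper bound, the parallelogram spanned by consecutive saddle-connection convergents has area $|p_n q_{n+1} - p_{n+1} q_n|$, and this quantity is bounded uniformly in $n$ by the finite Minkowski constant of $V_{\text{sc}}(S)$ supplied by Theorem~\ref{t:goodMink}. As in the classical theory this forces $|\xi - p_n/q_n| \ll 1/(|q_n|\,|q_{n+1}|)$. Squeezing against the lower bound yields the recursion $|q_{n+1}| \ll |q_n|^{eD - 1}$, whence iteratively $\log q_n \ll (eD-1)^n$ and $\limsup \log\log q_n / n \le \log(eD - 1)$.

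The principal obstacle is that this exponent depends on the degree $e$ of $\xi$, whereas the theorem demands the uniform value $2D - 1$; the two agree only when $e = 2$. To handle all $e \ge 2$ uniformly, the Liouville input must be replaced by the Thue--Siegel--Roth theorem for number fields (Roth--LeVeque), or more conceptually by the Schmidt subspace theorem applied to the lattice $\mathcal{O}_\mathbb{K}^2 \hookrightarrow \mathbb{R}^{2D}$ with the $2D$ linear forms built from $y - \xi x$ together with its Galois twists $\sigma(y) - \xi \sigma(x)$ and the coordinate forms. Theorem~\ref{l:holVectorEntries} is the decisive ingredient here as well: it identifies the naive denominator $|q_n|$ with the height $H(p_n/q_n)$ up to a bounded constant, so that the degree-free Diophantine exponent issued by Roth--LeVeque or Schmidt translates, through the squeeze, into the precise bound $2D - 1$ stated in the theorem.
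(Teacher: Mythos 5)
Your overall strategy --- squeezing $|\xi - p_n/q_n|$ between a Diophantine lower bound and the upper bound $\mu(Z)/(q_nq_{n+1})$ of Lemma~\ref{l:convergencePlus}, reading off a recursion $q_{n+1}\ll q_n^{E}$, and iterating --- is exactly the paper's, and your recognition that the Liouville/norm-form warm-up only yields the degree-dependent exponent $eD$ and must be replaced by Roth--LeVeque (Theorem~\ref{t:Lev}) is also how the paper proceeds. But the final paragraph, where the exponent $2D-1$ is supposed to emerge, contains a genuine error: you assert that Theorem~\ref{l:holVectorEntries} ``identifies the naive denominator $|q_n|$ with the height $H(p_n/q_n)$ up to a bounded constant.'' That is not true, and if it were, the squeeze would give $q_{n+1}\ll q_n^{1+\eps}$ and hence a limsup bound of $0$, not $\log(2D-1)$. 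What Theorem~\ref{l:holVectorEntries} (together with the integrality $m\,\text{V}_{\text{sc}}(S)\subset\mathcal O_K^2$ and Equation~\eqref{eq:upBdHt}) actually controls is the \emph{logarithmic Weil height}: $h(p_n/q_n)\le \log q_n + O(1)$. Passing to the \emph{naive} height $H$, which is the one appearing in Roth--LeVeque, costs a factor of $\deg(p_n/q_n)\le D$ in the exponent via $\log H(\alpha)\le\deg(\alpha)\,(h(\alpha)+\log 2)$ (Equation~\eqref{eq:heightsReltd}); this is precisely the paper's Lemma~\ref{l:heightBd}, $H(p_n/q_n)\le c_2 q_n^D$. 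Only with this $D$ in place does Roth--LeVeque give $|\xi-p_n/q_n|\gg q_n^{-2D-D\eps}$, whence $q_{n+1}\ll q_n^{2D-1+D\eps}$ and, after iterating and letting $\eps\to 0$, the stated threshold $\log(2D-1)$. As written, your argument asserts the constant $2D-1$ without deriving it, and the one quantitative claim you do make about heights would derive a different constant.

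Two smaller points. First, your Liouville warm-up uses $|\sigma(p_n)|\le c^{-1}|q_n|$, whereas Theorem~\ref{l:holVectorEntries} only bounds $|\sigma(p_n)|$ by $c^{-1}|p_n|$; you should add that $|p_n|\ll |q_n|$, which holds because $p_n/q_n\to\xi\in[0,1]$. Second, the appeal to the Schmidt subspace theorem is unnecessary overhead; the number-field form of Roth's theorem quoted in the paper is all that is needed or used.
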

  
\subsection{Related work} 
There exist algorithms that approximate flow directions on particular translation surfaces by so-called parabolic directions, see \cite{AH}, \cite{SU}, \cite{SU2}.    
Roughly speaking, these algorithms can be viewed as continued fraction algorithms expressing real values in terms of the orbit of infinity under the action of a related Fuchsian group.  Up to finite index and appropriate normalization,  each underlying group in these examples is one of the  infinite family of Hecke triangle Fuchsian groups, \cite{Vch}.     Some 60 years ago, for each Hecke group,  D. ~Rosen \cite{R} gave a continued fraction algorithm.    Motivated in part by the use in \cite{AS} of the Rosen fractions to identify pseudo-Anosov directions with vanishing so-called SAF-invariant,  with Y. Bugeaud, in \cite{BHS} we recently gave the first transcendence results using Rosen continued fractions.   Theorem ~\ref{t:transc} is the analog of a main result there.

Each Hecke group is contained in a particular $\text{PSL}(2,  K)$ with $K$  a totally real number field.   Key to the approach  of \cite{BHS} was the fact that any element in a Hecke group  of sufficiently large trace is such that this trace  is appropriately larger than each of its conjugates over $\mathbb Q$.   This leads to a bound of the height of a convergent $p_n/q_n$ in terms of $q_n$ itself.   The LeVeque form of Roth's Theorem, in combination with a bound on approximation in terms of $q_n q_{n+1}$, 
 can then be used to show that transcendence  is revealed by exceptionally high rates of growth of the $q_n$.      We show here that all of this is possible for any Veech surface, replacing Rosen fractions by  $Z$-expansions with $Z = \text{V}_{\text{sc}}(S)$.   Key to this is our results that  (1) any nontrivial Veech group $\Gamma(S)$ has the property of the dominance of traces over their conjugates,  and (2) in the case of a Veech surface $S$,  dominance property for the group implies the dominance of components of saddle vectors over their conjugates.

We mention that it would be interesting to compare the approximation in terms of saddle connection vectors with the known instances of approximation with parabolic directions.

\subsection{Outline}    In the following section we sketch some of the disparate background necessary for our results;   in Section ~\ref{s:MinkFin} we prove the crucial result that the Minkowski constant is finite for any compact translation surface;   in Section ~\ref{s:bdHts} we show that if  $S$ is a Veech surface then  $\Gamma(S)$ has the property of dominating conjugates and from this that one can bound the heights in the $Z$-expansions, $Z = \text{V}_{\text{sc}}(S)\,$;  finally, in Section ~\ref{s:trans} we very briefly show that the arguments of \cite{BHS} are valid here:     $Z = \text{V}_{\text{sc}}(S)\,$-expansions with extremely rapidly growing denominators belong to transcendental numbers. 

\subsection{Thanks}  It is a pleasure to thank Curt McMullen for asking if the results of \cite{BHS} could hold in the general Veech surface setting.   We also thank Emmanuel Russ for pointing out the reference \cite{Ba}.
 
\section{Background}
\subsection{Cheung's $Z$-expansions}\label{ss:theZrev}  We briefly review Yitwah Cheung's definition of his $Z$-expansions --- we follow  Section 3 of \cite{CHM}, although we focus on approximation of a ray instead of a line.   Fix a discrete set $Z \subset \mathbb R^2$,  and assume that $Z$ does not contain the zero vector.    Given a positive real $\theta$,  consider the ray  emitted from the origin with slope $1/\theta$.   Our goal is to define a sequence of elements of $Z$ that approximates this ray.    

\begin{Rmk}  Note that the number that is approximated here is the {\em inverse of the slope} of the ray.   This choice accords well with the {\em projective} action of $\text{SL}_2(\mathbb R)$ on $\mathbb P^1(\mathbb R) = \mathbb R \cup \{\infty\}$.   
\end{Rmk} 

Let $u$ be the unit vector in the direction of the ray.   Denote the positive half plane of the ray by  $H_{+}(\theta) = \{ v \in \mathbb R^2\,\vert\, u \cdot v > 0\}$, and let  $Z_{+}(\theta) := Z \cap H_{+}(\theta)$.    Let $v = (p,q) \in \mathbb R^2\,$;   the difference vector  between $v$ and the vector whose endpoint is given by the intersection of $y=  x/\theta$ and $y = q$ has length of absolute value $\text{hor}_{\theta}(v) = \vert q \theta - p\vert$.    The value $q$ is the {\em height} of $v$ and $\text{hor}_{\theta}(v)$ is its {\em horizontal component}, see Figure ~\ref{zParalleloFig}.

\begin{Def}  The {\em $Z$-convergents} of $\theta$ is the set of elements of $Z$ in the half-plane of the ray  such that each minimizes the  horizontal component $\text{hor}_{\theta}(v)$ amongst elements of equal or lesser height: 
\[ \text{Conv}_{Z}(\theta) = \{ v \in Z_{+}(\theta)\,\vert\; \forall w \in Z_{+}(\theta), |w_2| \le |v_2| \implies \text{hor}_{\theta}(v) \le \text{hor}_{\theta}(w)\,\}\,.\]

The {\em $Z$-expansion} of  $\theta$ is the sequence obtained by ordering the set $\text{Conv}_{Z}(\theta)$  by height,  where we choose as necessary between elements of the same height.  
\end{Def}

Recall from \cite{CHM}  that if $Z$ contains some element of the $x$-axis,  and there are infinitely many $Z$-convergents to $\theta$, then certainly the heights of the sequence tend to infinity.  

\begin{Def}  The {\em Minkowski constant} of $Z$ is 
\[ \mu(Z) = \dfrac{1}{4}\, \sup \text{area}(\mathcal C)\,\]
where $\mathcal C$ varies through bounded, convex,  $(0,0)$-symmetric sets that are disjoint from $Z$.
\end{Def}

Finiteness of the Minkowski constant assures good approximation, see  \cite{CHM} for the proof of the following. 
\begin{Lem} (Cheung {\em et al.})  Suppose both  that $\mu(Z)$ is finite and that $Z$   contains a non-zero vector on
the $x$-axis. Then the $Z$-expansion of a direction with inverse slope $\theta$ is
infinite if and only if no element of $Z$ lies in this direction.
 \end{Lem}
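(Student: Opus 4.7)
The plan is to handle the two implications separately, with the substantive work in the direction ``no element of $Z$ on the ray implies infinite $Z$-expansion''.

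First, if some $v_0 \in Z$ lies on the ray, choose $v_0$ of minimum height on the ray (possible by discreteness of $Z$); then $\text{hor}_\theta(v_0) = 0$, so $v_0$ is itself a $Z$-convergent, and any candidate convergent $v$ of strictly greater height must satisfy $\text{hor}_\theta(v) \leq \text{hor}_\theta(v_0) = 0$, so $v$ lies on the ray too. Under the convention (captured by the ``choose as necessary between elements of the same height'' clause) of not listing parallel scalar multiples of $v_0$ as separate entries in the expansion, the expansion then truncates and is finite.

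For the contrapositive, suppose no element of $Z$ lies on the ray and suppose for contradiction that the set of $Z$-convergents is finite. Ordering the convergents by increasing height, their horizontal components form a non-increasing sequence: if $v \ne v'$ are convergents with $|v_2| < |v'_2|$, then applying the defining inequality for $v'$ with $w = v$ gives $\text{hor}_\theta(v') \leq \text{hor}_\theta(v)$. Let $v^*$ be the convergent of maximum height and set $h_* := \text{hor}_\theta(v^*) > 0$ (positive since $v^*$ is not on the ray). The key step is to show that \emph{no element of $Z_+(\theta)$ has horizontal component strictly less than $h_*$.} Were there such a vector $v$, discreteness of $Z$ would let one choose $v_1 \in Z_+(\theta)$ of minimum height among those with $\text{hor}_\theta(\cdot) < h_*$, breaking ties by taking minimum horizontal component at that height; a direct check using the minimality then shows that $v_1$ is itself a $Z$-convergent, with $\text{hor}_\theta(v_1) < h_*$, contradicting the non-increasing sequence of horizontal components terminating at $h_*$.

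With the key step in hand, the finiteness of $\mu(Z)$ yields the final contradiction. Consider the parallelogram
\[
R_k := \bigl\{(x,y) \in \mathbb{R}^2 : |\,y\theta - x\,| < h_*,\ |y| < k\bigr\},
\]
which is open, bounded, convex, and centrally symmetric, of area $4 h_* k$. The key step immediately gives that $R_k$ contains no non-zero element of $Z$ in $H_+(\theta)$; by the ambient symmetry $Z = -Z$ (which is standard in the saddle-connection setting and the analogues to which the lemma applies) the same holds in the opposite half-plane; and the hypothesized non-zero $x$-axis element $(a,0) \in Z$ satisfies $|a| \geq h_*$ (by the key step applied to whichever of $(a,0)$ or $-(a,0)$ lies in $H_+(\theta)$), so it too avoids $R_k$. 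Hence $R_k$ is a centrally symmetric convex body disjoint from $Z \setminus \{0\}$, giving $\mu(Z) \geq h_* k$ for arbitrary $k > 0$ and contradicting $\mu(Z) < \infty$. The main obstacle is the careful verification that the constructed $v_1$ really satisfies the convergent definition, and correctly tracking the distinction between $H_+(\theta)$ and its opposite half-plane when invoking the symmetry of $Z$ together with the $x$-axis hypothesis.
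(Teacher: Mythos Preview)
The paper does not supply its own proof of this lemma; it simply refers the reader to \cite{CHM}. Your argument for the substantive implication (no $Z$-vector in the direction forces infinitely many convergents) is the natural one and presumably matches the cited source: if the convergents stopped at some $v^*$ with $h_*=\text{hor}_\theta(v^*)>0$, the strip $\{|y\theta-x|<h_*\}$ would contain arbitrarily large centrally symmetric parallelograms disjoint from $Z$, contradicting $\mu(Z)<\infty$.

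Two points deserve tightening. First, in the ``only if'' direction your appeal to a convention of discarding parallel multiples is not supported by the paper's definitions: the clause you quote concerns ties in \emph{height}, not collinear vectors of different heights. As literally stated, this direction can fail for general $Z$ (take $Z=\mathbb{Z}^2\setminus\{0\}$ and rational $\theta$, where every $(n,n\theta^{-1})$ on the ray is a convergent); one should instead note that the intended $Z$ contains only finitely many vectors in any given direction, or rephrase. Second, in the ``if'' direction you mis-identify what remains after handling $H_+(\theta)$ and $H_-(\theta)$ via the symmetry $Z=-Z$: the leftover set is the line $\{v:u\cdot v=0\}$ orthogonal to the ray, which is \emph{not} the $x$-axis. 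Since $\theta>0$, the $x$-axis vector already lies in $H_\pm(\theta)$ and your separate treatment of it is redundant, while the genuine boundary case goes unaddressed. The fix is easy: any $v\in Z$ with $u\cdot v=0$ and $\text{hor}_\theta(v)<h_*$ lies in a fixed bounded neighborhood of the origin, so by discreteness there are finitely many such $v$, and after possibly shrinking $h_*$ the parallelograms $R_k$ avoid them too.
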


\begin{figure}
\scalebox{.6}{
{\includegraphics{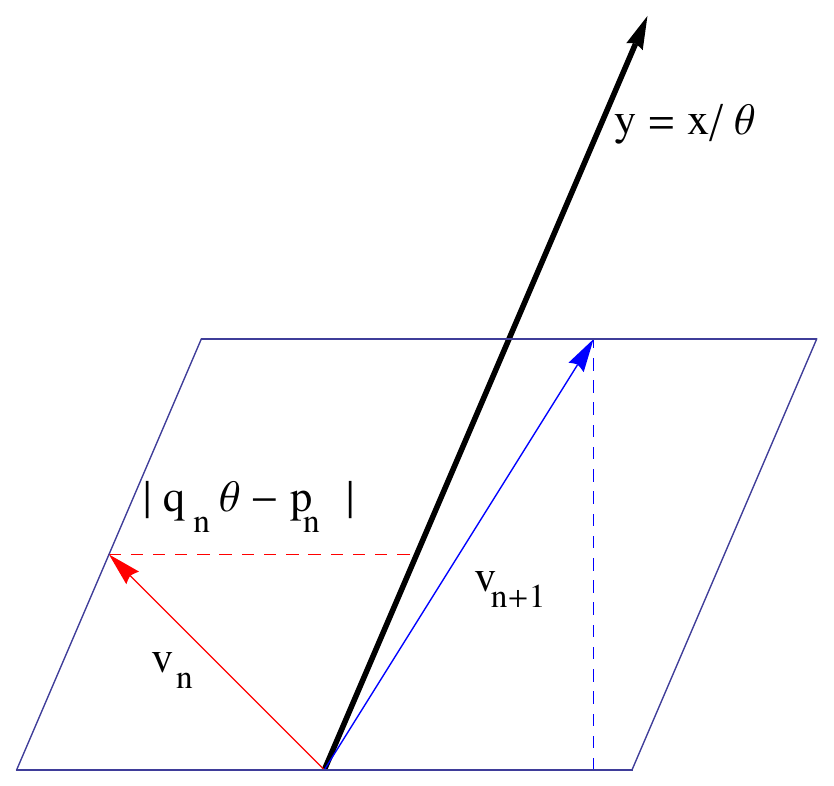}}}
\caption{Vectors $v_n$ approximate ray $y = x/\theta\,$;  parallelogram related to Lemma ~\ref{l:convergencePlus}.}
\label{zParalleloFig}
\end{figure}
 Denote the $n$-th element of the $Z$-expansion of $\theta$ by $(p_n, q_n)$.     Then the following is also shown in \cite{CHM}, see our Figure ~\ref{zParalleloFig}.  

\begin{Lem}\label{l:convergencePlus}(Cheung {\em et al.}) The $Z$-expansion  of $\theta$ satisfies 

\[ \dfrac{|p_n q_{n+1}-p_{n+1}q_n|}{ 2 \, q_n \,q_{n+1} } <  |  \theta  - p_n/ q_n | \le \mu(Z)/(q_n \, q_{n+1}) \,.\]

\end{Lem}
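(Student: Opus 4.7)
The statement splits into two inequalities of quite different nature, which I would treat separately.

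For the upper bound $|\theta - p_n/q_n| \le \mu(Z)/(q_n q_{n+1})$, the plan is to exhibit a centro-symmetric convex body $P_n$ of area $4\varepsilon_n q_{n+1}$ that is disjoint from $Z$, where $\varepsilon_n := \text{hor}_\theta(p_n,q_n) = |q_n\theta - p_n|$; then by definition of the Minkowski constant one gets $\mu(Z) \ge \varepsilon_n q_{n+1}$, and since $|\theta - p_n/q_n| = \varepsilon_n/q_n$, dividing by $q_n$ yields the claim. I would take $P_n$ to be the parallelogram centered at the origin whose two ``horizontal'' sides lie at heights $\pm q_{n+1}$ and whose other two sides are parallel to the ray direction $(\theta,1)$, placed at horizontal offset $\varepsilon_n$ on either side of the ray (see Figure~\ref{zParalleloFig}). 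The area-preserving change of variables $(p,q) \mapsto (q\theta - p, q)$ carries $P_n$ to the axis-aligned rectangle $[-\varepsilon_n,\varepsilon_n] \times [-q_{n+1},q_{n+1}]$, so its area is indeed $4\varepsilon_n q_{n+1}$.

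Verifying that the open interior of $P_n$ avoids $Z$ amounts to showing that every $w \in Z_{+}(\theta)$ with $|w_2| < q_{n+1}$ satisfies $\text{hor}_\theta(w) \ge \varepsilon_n$. For $|w_2| \le q_n$ this is the defining property of the convergent $v_n$. For $q_n < |w_2| < q_{n+1}$, discreteness of $Z$ ensures that the minimum of $\text{hor}_\theta$ on $Z_{+}(\theta) \cap \{|v_2| \le |w_2|\}$ is attained; if this minimum were strictly less than $\varepsilon_n$, the attaining vector would be a convergent strictly between $v_n$ and $v_{n+1}$ in the height ordering, contradicting their consecutiveness in the $Z$-expansion. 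Since $Z = \text{V}_{\text{sc}}(S)$ satisfies $Z = -Z$ (each saddle connection admits two orientations), the same bound transfers by symmetry to the opposite half-plane, so the interior of $P_n$ is disjoint from $Z$, giving the desired area bound.

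The lower bound is elementary. The triangle inequality yields
\[
\frac{|p_n q_{n+1} - p_{n+1} q_n|}{q_n q_{n+1}} = \left|\frac{p_n}{q_n} - \frac{p_{n+1}}{q_{n+1}}\right| \le \left|\theta - \frac{p_n}{q_n}\right| + \left|\theta - \frac{p_{n+1}}{q_{n+1}}\right|,
\]
and since $\varepsilon_{n+1} < \varepsilon_n$ while $q_{n+1} > q_n$ along a $Z$-expansion, $|\theta - p_{n+1}/q_{n+1}| = \varepsilon_{n+1}/q_{n+1} < \varepsilon_n/q_n = |\theta - p_n/q_n|$; thus the right-hand side is strictly less than $2|\theta - p_n/q_n|$, and dividing by $2$ finishes the proof. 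The chief obstacle is the avoidance claim for $P_n$: one must carefully chain the convergent-minimality condition across the interval $(q_n, q_{n+1})$ using discreteness, and invoke $Z = -Z$ to promote the one-sided bound from the convergent definition to the genuinely centro-symmetric statement required by the definition of $\mu(Z)$.
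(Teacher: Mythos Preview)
Your argument is correct and matches the approach that the paper itself only gestures at: the paper does not actually prove this lemma but cites \cite{CHM} and points to Figure~\ref{zParalleloFig}, which depicts precisely the parallelogram $P_n$ you construct. Two small remarks: you only need $\varepsilon_{n+1}\le \varepsilon_n$ (which is immediate from the convergent definition) together with the strict $q_{n+1}>q_n$ to get the strict lower bound, so your claimed strict $\varepsilon_{n+1}<\varepsilon_n$ is unnecessary; and your invocation of $Z=-Z$ is an extra hypothesis beyond the bare setup of Section~\ref{ss:theZrev}, but it is harmless here since the only $Z$ in play is $\text{V}_{\text{sc}}(S)$, which is indeed symmetric.
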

 
\newpage  
\subsection{Translation surfaces}

\subsubsection{Translation surfaces;     Veech surfaces}  For all of this material,  see the expository articles \cite{MT}, \cite{Z}.  
 A {\em translation surface}  is a real surface with an atlas such that, off of a finite number of points, transition functions are translations.  Here we consider only  {\em compact} surfaces, and will continued to do so without further notice.   From the Euclidean plane, this punctured surface inherits a  flat  metric,  and this metric extends to the complete surface, with (possibly removable)  conical singularities at the punctures.     Due to the transition function being translations,  directions of linear flow on a translation surface are well-defined, and Lebesgue measure is inherited from the plane.   We define $\text{vol}(S)$ to be the total Lebesgue measure of the surface.

  Post-com\-posing the coordinate function of a chart from the atlas of a translation surface with any element of $\text{SL}_2(\mathbb R)$ results in a new translation surface.  This action preserves the underlying topology,  the types of the conical singularities, and the area of the surface.     

Related to this, an {\em affine diffeomorphism} of the translation surface is a homomorphism that restricts to be a diffeomorphism on the punctured flat surface whose derivative is a {\em constant}  $2 \times 2$ real matrix.  W.~ Veech \cite{Vch}  showed that for any compact translation surface $S$,  the matrices that arise as such derivatives of (orientation- and area-preserving) affine diffeomorphisms form a Fuchsian group $\Gamma(S)$, now referred to as the {\em Veech group} of the surface.       

A {\em Veech surface}  is a translation surface such that  the group $\Gamma(S)$ is a co-finite subgroup of 
 subgroup of $\text{SL}(2, \mathbb R)$; that is,  such that the quotient of the Poincar\'e upper half-plane by $\Gamma(S)$ (using the standard 
fractional linear action) has finite hyperbolic area.   Equivalently,   $\Gamma(S)$ is a 
lattice in $\text{SL}(2, \mathbb R)$; indeed, some refer to a Veech surface as having the ``lattice property''.    

\subsubsection{Saddle connections; ergodicity of action,  parabolic directions}\label{s:sadErgPar}   A {\em saddle connection} on a translation surface $S$ is a geodesic segment connecting singularities (with no singularities on its interior).   By using the local coordinates of the translation surface, each saddle connection defines a vector in $\mathbb R^2$.   The collection of these (affine) {\em saddle connection vectors} is $\text{V}_{\text{sc}}(S)$.    That  $\text{V}_{\text{sc}}(S)$ contains an element of length at most $\sqrt{2 \, \text{vol}(S)} $ was shown by   Vorobets \cite{V}(see the proof of Proposition~  3.2 there).  

Local coordinates on the {\em stratum} that is the space of translation surfaces of fixed genus and singularities type is provided by the integral relative to the set of singularities.   In wording from \cite{EMZ},  the saddle connections cut $S$ into a collection of polygons which provide local coordinates.   The stratum then inherits a Lebesgue measure, as \cite{Z} says,  a key theorem is that if Masur and Veech:   the $\text{SL}(2, \mathbb R)$ action on translation surface preserves this measure and is ergodic on connected components of the strata.

Results of Veech \cite{Vch} imply that if  $\Gamma(S)$ is a lattice (and $S$ has singularities), then the  direction of any saddle connection vector  is a parabolic direction --- there is a parabolic element of $\Gamma(S)$ fixing a vector in this direction, and that there is some saddle connection vector in each parabolic direction.   Since a lattice in $\text{SL}_2(\mathbb R)$ has only finitely many parabolic conjugacy classes, a Veech surface has only finitely many  $\Gamma(S)$-orbits of parabolic directions.

\subsubsection{Trace field,   standard form}
  Gutkin and Judge \cite{GJ} defined the {\em trace field} of a translation surface to be the field extension of the rationals generated by the traces of derivatives of the affine diffeomorphisms of the surface; this is clearly independent of choice of a  translation surface within an $\text{SL}_2(\mathbb R)$-orbit.    A result of Gutkin and Judge (see Lemma 7.5 of \cite{GJ}) implies that the ratio of the lengths of any two saddle connection  vectors in a common parabolic direction gives an element of the trace field.   
  
  M\"oller \cite{Moe}, see Proposition ~2.6 there,  showed  that the trace field of any Veech surface is totally real (that is,  every field embedding into the complex numbers sends the field to a subfield of the real numbers).    The result holds true under weaker hypotheses,  see \cite{HL}, \cite{CS}.

      Calta and Smillie in \cite{CS} introduced a notion of {\em standard form} of a translation surface; for a Veech surface, standard form means having the vertical, the horizontal and the diagonal as parabolic directions.   They show that when a Veech surface is in standard form, the parabolic directions all lie in the trace field.     Combining this with the aforementioned result of Gutkin and Judge, one finds that,  by scaling and choice within $\text{SL}_2(\mathbb R)$-orbit, the saddle connection vectors of a Veech surface can be assumed to have components in the trace field.      Furthermore,   Kenyon and Smillie \cite{KS},  see the proof of Corollary~29 there,   argue that the $\mathbb Z$-module generated by the saddle connection vectors has a submodule of finite index that is contained in $\mathcal O_K \oplus \mathcal O_K$,  where $\mathcal O_K$ denotes the ring of integers of the trace field $K$.    In particular,  there is some $m \in \mathbb N$ such that for any $v \in \text{V}_{sc}(S)$,  the components of $m v$ are in $\mathcal O_K$.

\subsubsection{Traces of hyperbolics dominate conjugates}\label{ss:domination}   
A Fuchsian
group $\Gamma$  is said to have a {\em modular embedding} if there exists an arithmetic group $\Delta$ acting on $\mathbb H^n$  for an appropriate $n$, an inclusion $f : \Gamma \to \Delta$ and a holomorphic
embedding
$F = (F_1, . . . , F_n) : \mathbb H\to \mathbb H^n$
such that $F_1 = \text{id}$ and $F(\gamma \cdot z) = f(\gamma)\cdot F(z)$, see   \cite{CW}.

Schmutz Schaller and Wolfart \cite{SW} (see in particular their Corollary 5) show that if a Fuchsian group has a modular embedding, then its hyperbolic elements dominate their conjugates in absolute value.       For ease of discussion, let us call this the {\em domination of conjugates} property.  

M.~ M\"oller  \cite{Moe} shows that if $S$ is a Veech surface, then $\Gamma(S)$  is commensurable to a Fuchsian group with a modular embedding  (see  his Corollary 2.11).    Commensurability here means  up to finite index and $\text{SL}_2(\mathbb R)$-conjugation; it directly follows that a Veech group always has a finite index  subgroup with the  domination of conjugates property.       Using trace relations based upon the fundamental  $\text{tr}(\gamma^2) = (\, \text{tr}(\gamma)\,)^2 - 2$, it is quite likely that this can be extended to the full group.  

In the proof of Lemma ~\ref{t:domConj}, we give a more elementary  derivation, showing that any Veech group containing a hyperbolic element has the  domination of conjugates property.\\

\subsection{Approximation by algebraic numbers}

In the following, we repeat some lines of background from \cite{BHS}.

The following result was announced by Roth \cite{Roth}
and proven by LeVeque, see Chapter 4 of \cite{L}.  
(The version below is Theorem 2.5  of \cite{B}.)     
Recall that given an algebraic number $\alpha$, its {\em naive height}, denoted 
by $H(\alpha)\,$, is the largest  absolute value  of the coefficients of 
its minimal polynomial over $\mathbb Z\,$.

\begin{Thm}\label{t:Lev}(LeVeque) Let $K$ be a number field,  and $\xi$ a real algebraic number not in $K\,$.  Then, for any $\epsilon >0\,$, there exists a positive constant $c(\xi, K, \epsilon)$ such that 
\[ |\, \xi - \alpha \,| > \dfrac{c(\xi, K, \epsilon)}{H(\alpha)^{2+\epsilon}} \]
holds for every $\alpha$ in $K$.
\end{Thm}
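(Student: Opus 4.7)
The plan is to adapt Roth's theorem (which is the special case $K = \mathbb{Q}$) to approximation by elements of a fixed number field, following the Thue--Siegel--Roth method.  Argue by contradiction: if the conclusion fails, then for every positive constant $c$ there exist infinitely many $\alpha \in K$ satisfying $|\xi - \alpha| \le c\, H(\alpha)^{-(2+\epsilon)}$.  Pass to a finite sequence of approximations $\alpha_1, \ldots, \alpha_m \in K$ chosen so that the heights grow extremely rapidly, i.e.\ $\log H(\alpha_{j+1})$ dominates a large multiple of $\log H(\alpha_j)$, exactly as in Roth's original argument.

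The core step is to construct, via a Siegel-type pigeonhole argument (or Minkowski's linear form theorem) applied in $\mathbb{Z}[\xi]$, an auxiliary polynomial $R(x_1, \ldots, x_m) \in \mathbb{Z}[x_1,\ldots,x_m]$ of prescribed multi-degree $(r_1, \ldots, r_m)$ whose integer coefficients have controlled size and which vanishes to a specified high index at the diagonal point $(\xi, \xi, \ldots, \xi)$.  On the one hand, Taylor-expanding $R$ at the diagonal together with the hypothesized approximation $|\xi - \alpha_j| \le H(\alpha_j)^{-(2+\epsilon)}$ forces $|R(\alpha_1, \ldots, \alpha_m)|$ to be extraordinarily small.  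On the other hand, since each $\alpha_j$ lies in $K$, after clearing denominators in $\mathcal{O}_K$ and applying the product formula over the places of $K$, together with the easy bounds $|\sigma(\alpha_j)| \ll H(\alpha_j)$ at the non-identity archimedean embeddings, one obtains a Liouville-type lower bound for $|R(\alpha_1, \ldots, \alpha_m)|$, provided this number is not zero.  Comparing the two estimates yields a contradiction, and choosing the parameters $r_j$ optimally against the imposed index of vanishing recovers exactly the exponent $2 + \epsilon$.

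The main obstacle, precisely as in the original proof of Roth's theorem, is showing that the auxiliary polynomial $R$ does not vanish to too high an index at $(\alpha_1, \ldots, \alpha_m)$: without this, the lower bound step collapses.  This is the content of Roth's generalization of Dyson's lemma (the so-called ``index lemma''), and transporting it to the number-field setting requires careful bookkeeping of the heights of $R$ and of the degrees $r_j$, as well as a systematic use of the conjugate embeddings of $K$.  Everything else (Taylor estimates, the product-formula lower bound, the parameter optimization) is essentially formal once the index lemma is in hand.
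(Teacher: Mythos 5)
The paper does not prove this statement at all: it is quoted verbatim as a classical result, announced by Roth and proved by LeVeque (Chapter~4 of LeVeque's \emph{Topics in Number Theory}; the precise formulation used is Theorem~2.5 of Bugeaud's book). So there is no internal proof to compare against, and nothing in the rest of the paper depends on reproving it.

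As a description of how the theorem is actually proved, your outline is architecturally accurate: the contradiction setup with rapidly growing heights, the Siegel-lemma construction of an auxiliary polynomial in $\mathbb{Z}[x_1,\ldots,x_m]$ vanishing to high index at $(\xi,\ldots,\xi)$, the Taylor-expansion upper bound, the product-formula/Liouville lower bound over the places of $K$ (this is exactly the modification LeVeque makes to Roth's argument for $K=\mathbb{Q}$), and the reliance on Roth's index lemma to rule out excessive vanishing at $(\alpha_1,\ldots,\alpha_m)$. However, judged as a proof rather than a road map, it has a genuine gap: everything you have written is the scaffolding, while the entire technical content of the theorem --- the proof of the index lemma with its inductive descent on the number of variables, and the quantitative bookkeeping of heights, degrees, and indices that makes the two estimates actually collide with exponent $2+\epsilon$ --- is named but not carried out. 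In particular, the phrase ``transporting it to the number-field setting requires careful bookkeeping'' conceals the one place where the generalization from $\mathbb{Q}$ to $K$ is nontrivial. Since this is a deep standard theorem, the appropriate move in the context of this paper is exactly what the authors do: cite it. If you intend to prove it, the index lemma and the parameter optimization must be written out in full.
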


 \bigskip

          The {\em logarithmic Weil height} of $\alpha$ lying in a number field $K$  of degree $D$ over $\mathbb Q$ is  $h(\alpha) =  \frac{1}{D}\sum_{\nu} \, 
\log^{+} || \alpha ||_{\nu}$, where $\log^{+} t$ equals $0$ if $t\le 1$ and $M_K$ denotes the places (finite and infinite ``primes'') of the field,  and $|| \cdot ||_{\nu}$ is the $\nu$-absolute value.     
This definition is independent of the field $K$ containing $\alpha$.    

The {\em product formula} for the number field $K$ is: $\prod_{\nu \in M_K} \,|| \alpha ||_{\nu} = 1$.  From this, $\forall \, \alpha, \beta \in \mathcal O_K$ with $\beta \neq 0$, such that the  ideals 
$\langle \alpha \rangle, \langle \beta \rangle$ have no common prime divisors, one finds  that  $h(\alpha/\beta) =  \frac{1}{D}\sum_{\sigma} \, 
\log^{+} \max\{| \sigma( \alpha)|, | \sigma( \beta)|\,\}$, where $\sigma$ runs through the infinite places of $K$, which we consider as field  embeddings.   Even upon dropping the relative primality condition, one finds 

\begin{equation}\label{eq:upBdHt}
h(\alpha/\beta) \le  \frac{1}{D}\sum_{\sigma} \, 
\log^{+} \max\{| \sigma( \alpha)|, | \sigma( \beta)|\,\}\,.
\end{equation}

The two heights are related by 
\begin{equation}\label{eq:heightsReltd}    
\log H(\alpha) \le \deg(\alpha) \, (\,  h(\alpha) + \log 2\,)\,, 
\end{equation}
for any non-zero algebraic number $\alpha$,
see Lemma ~3.11 in \cite{W}.  
\\

Finally, recall that  standard transcendence notation includes the use of $\ll$ and $\gg$ to denote inequality with implied constant.

\section{Minkowski constants}\label{s:MinkFin}  

\subsection{Minkowski constants in strata}

The Minkowski constant of the nonzero holonomy of a translation surface defines a function that may be of true interest.   The following shows that it has properties in common with the Siegel-Veech invariants (see \cite{EMZ}).

\begin{Lem}\label{l:minkAsInvariant}      The function assigning to a translation surface $S$ the Minkowski constant of the set of saddle connection vectors, 
$S \mapsto \mu(\, \text{V}_{\text{sc}}(S)\,)$,   is constant on $\text{SL}_2(\mathbb R)$-orbits.  
\end{Lem}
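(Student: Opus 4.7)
The plan is to exploit the fact that the $\text{SL}_2(\mathbb R)$-action is by area-preserving linear maps of the plane, and that this action commutes appropriately with the notion of saddle connection vector.

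First I would observe that for any $g \in \text{SL}_2(\mathbb R)$, one has
\[
\text{V}_{\text{sc}}(gS) \;=\; g \cdot \text{V}_{\text{sc}}(S).
\]
This is essentially immediate from the definition of the action: post-composing charts by $g$ sends straight segments between singularities to straight segments between singularities (the singularities themselves are preserved set-wise as conical points), and the holonomy vector of such a segment in the new atlas is precisely the image under $g$ of the original holonomy vector.

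Next I would use the defining formula for the Minkowski constant to transport convex bodies. Let $\mathcal C$ be a bounded, convex, $(0,0)$-symmetric set disjoint from $Z := \text{V}_{\text{sc}}(S)$. Then $g\mathcal C$ is again bounded, convex, and centrally symmetric about the origin (since $g$ is linear), and $g\mathcal C$ is disjoint from $gZ = \text{V}_{\text{sc}}(gS)$ if and only if $\mathcal C$ is disjoint from $Z$. Moreover, because $g \in \text{SL}_2(\mathbb R)$ one has $\text{area}(g\mathcal C) = |\det g|\,\text{area}(\mathcal C) = \text{area}(\mathcal C)$. Thus the map $\mathcal C \mapsto g\mathcal C$ is an area-preserving bijection between the class of admissible bodies for $Z$ and the class of admissible bodies for $gZ$.

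Taking suprema over these two classes and multiplying by $1/4$ yields $\mu(\text{V}_{\text{sc}}(gS)) = \mu(\text{V}_{\text{sc}}(S))$, which is the claim. There is no serious obstacle here: the only thing to check with any care is the identity $\text{V}_{\text{sc}}(gS) = g\cdot\text{V}_{\text{sc}}(S)$, and even that follows formally from the way the $\text{SL}_2(\mathbb R)$-action is defined on strata together with the fact that saddle connections are intrinsically defined as geodesic segments joining singularities in the flat metric, a notion whose image under $g$ is again a geodesic segment joining singularities (in the pushed-forward flat structure) with holonomy $g\cdot v$.
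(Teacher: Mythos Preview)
Your argument is correct and is essentially the same as the paper's: both rely on the facts that $\text{V}_{\text{sc}}(gS) = g\cdot \text{V}_{\text{sc}}(S)$ and that the linear action of $\text{SL}_2(\mathbb R)$ sends the family of bounded, origin-symmetric convex bodies to itself while preserving area. Your version simply makes the area-preservation and the bijection between admissible bodies explicit, whereas the paper states these points in a single sentence.
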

\begin{proof}    The action of $\text{SL}_2(\mathbb R)$ on translation surfaces sends  saddle connection vectors to saddle connection vectors.    But, this standard action sends the collection of  bounded,  convex sets that are symmetric about the origin to itself.  
\end{proof}

The following is now implied by the ergodicity of the $\text{SL}(2, \mathbb R)$ action. 
 
\begin{Cor}\label{l:minkDense}     Any connected component of the moduli space of abelian differentials of a given signature has a subset of full measure on which the Minkowski constant   is constant.  
\end{Cor}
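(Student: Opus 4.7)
The plan is to combine the invariance established in Lemma \ref{l:minkAsInvariant} with the Masur--Veech ergodicity theorem recalled in Section \ref{s:sadErgPar}. Since the $\text{SL}_2(\mathbb R)$-action preserves the natural Lebesgue measure on each connected component of a stratum and is ergodic there, any measurable $\text{SL}_2(\mathbb R)$-invariant real-valued function on that component is constant on a subset of full measure. By Lemma \ref{l:minkAsInvariant}, the assignment $S \mapsto \mu(\text{V}_{\text{sc}}(S))$ is such an invariant function, so once measurability is in hand, the corollary is immediate.

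The one point that requires justification is therefore that $\mu(\text{V}_{\text{sc}}(\cdot))$ is Borel measurable with respect to the period-coordinate topology on the stratum. I would work in period coordinates, using that the set $\text{V}_{\text{sc}}(S)$ deforms continuously under small perturbations of $S$: each saddle connection extends locally to a holomorphic family of nearby saddle connections, and no new saddle connection can enter a fixed compact region of $\mathbb R^2$ without first appearing on the boundary of that region, which is a codimension-one condition.

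Using this, I would fix a countable dense family $\{\mathcal C_j\}$ of bounded centro-symmetric convex bodies (for instance rational parallelograms or ellipses with rational parameters) and write
\[
\mu(\text{V}_{\text{sc}}(S)) = \tfrac{1}{4}\sup\bigl\{ \text{area}(\mathcal C_j) \,:\, \mathcal C_j \cap \text{V}_{\text{sc}}(S) = \emptyset\bigr\}.
\]
The condition $\mathcal C_j \cap \text{V}_{\text{sc}}(S) = \emptyset$ is an open condition in $S$ by the continuous deformation of saddle connections noted above, so each set $\{S \,:\, \mu(\text{V}_{\text{sc}}(S)) > a\}$ is a countable union of open sets, hence Borel.

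The only genuine obstacle is verifying this measurability claim; the density of the chosen family of convex bodies and the approximation of the supremum must be done carefully so that no loss occurs from restricting to a countable collection, but this is standard once one uses a fine enough family (e.g.\ taking $\mathcal C_j$ to be parallelograms with rational vertices, which approximate arbitrary centro-symmetric convex bodies in area). Ergodicity then finishes the proof with no further work.
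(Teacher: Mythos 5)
Your argument is precisely the paper's: Lemma~\ref{l:minkAsInvariant} gives $\text{SL}_2(\mathbb R)$-invariance of $S \mapsto \mu(\text{V}_{\text{sc}}(S))$, and the Masur--Veech ergodicity theorem then forces almost-everywhere constancy on each connected component; the paper offers nothing beyond this one-line remark, so your discussion of measurability is extra diligence rather than a divergence. One caveat on that extra part: the claim that $\{S : \mathcal C_j \cap \text{V}_{\text{sc}}(S) = \emptyset\}$ is \emph{open} is not literally correct, since a perturbation can create a new saddle connection out of a concatenation of saddle connections passing through an intermediate singularity (so the complement need not be closed); the set is nonetheless Borel by standard Siegel--Veech-type counting arguments, and the conclusion stands.
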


 We give an example where the Minkowski constant is small.   See Figure ~ 46 of \cite{Z} for a representation of the surface in question. 
 
\begin{Lem}\label{l:minkSmall}     Consider the translation surface $S$ given by the $L$-shaped square-tiled surface of three tiles.  Then    $\mu(\,\text{V}_{\text{sc}}(S)\,) =  \text{vol}(S)/3$. 
\end{Lem}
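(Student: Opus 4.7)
My plan is to determine $V_{\mathrm{sc}}(S)$ explicitly as a subset of $\Z^2$ and then apply Minkowski's classical convex body theorem. The three-tile $L$ surface has $\mathrm{vol}(S) = 3$ and admits a natural translation cover $\pi : S \to \mathbb T^2 = \R^2/\Z^2$ of degree three, branched only over the image of $0 \in \mathbb T^2$. Since $S$ lies in $\mathcal H(2)$ with a unique cone point of angle $6\pi$, the cover is fully ramified over $0$ and the fibre $\pi^{-1}(0)$ is exactly this singular point.

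I would then show that $V_{\mathrm{sc}}(S)$ coincides with the set $\Z^2_{\mathrm{prim}}$ of primitive integer vectors. Any saddle connection on $S$ develops into a straight segment in $\R^2$ from $0$ to some $v \in \Z^2$ whose open interior avoids every point of $\Z^2$, a condition equivalent to $\gcd(v_1,v_2)=1$. Conversely, for any primitive $v \in \Z^2$ the segment from $0$ to $v$ projects to a closed geodesic on $\mathbb T^2$ based at $0$ and disjoint in its interior from $0$; since $\pi^{-1}(0)$ is a single point, the lift to $S$ of this geodesic is a saddle connection with holonomy vector $v$. Hence $V_{\mathrm{sc}}(S) = \Z^2_{\mathrm{prim}}$.

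With this identification in hand, the Minkowski constant becomes a short geometry-of-numbers exercise. For the upper bound, let $\mathcal C \subset \R^2$ be any open, bounded, convex, centro-symmetric set with $\mathrm{area}(\mathcal C) > 4$. Minkowski's theorem produces a nonzero $w \in \mathcal C \cap \Z^2$; writing $w = dv$ with $v$ primitive and $d \geq 1$, the open segment from $0$ to $w$ lies in $\mathcal C$ by convexity and contains $v$, so $\mathcal C$ must meet $V_{\mathrm{sc}}(S)$. For the matching lower bound, the open square $(-1,1)\times(-1,1)$ has area $4$ and contains no primitive integer vector in its interior (any such vector would have both coordinates strictly between $-1$ and $1$, hence would be the zero vector). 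Together these give $\mu(V_{\mathrm{sc}}(S)) = 1 = \mathrm{vol}(S)/3$.

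The step I expect to be the main obstacle is the identification $V_{\mathrm{sc}}(S) = \Z^2_{\mathrm{prim}}$, which rests on recognizing that every vertex-type point of the polygon model of $S$ is identified to a single cone point. A careful side-pairing analysis shows that the six corners of the $L$ together with the two boundary points $(1,0)$ and $(0,1)$ at which the left and bottom edges get split for the identifications all lie in one equivalence class; the angle contributions $5\cdot(\pi/2) + 3\pi/2 + 2\pi = 6\pi$ confirm the $\mathcal H(2)$ signature by Gauss-Bonnet. Once this is in place, the rest of the proof is classical.
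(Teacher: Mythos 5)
Your proof is correct, and it reaches the paper's conclusion by the same overall reduction --- identify $\text{V}_{\text{sc}}(S)$ with the primitive vectors of $\mathbb Z^2$ and then evaluate the Minkowski constant of that set --- but the justification of the key identification is genuinely different. The paper argues via the Veech group: it observes that $\Gamma(S)$ is the Theta group, that the horizontal, vertical and diagonal directions visibly contain primitive saddle connection vectors, and that the Theta orbits of $(1,0)$, $(0,1)$, $(1,1)$ sweep out all primitive vectors (the three nonzero residue classes mod $2$). You instead use the degree-three translation cover $\pi : S \to \mathbb R^2/\mathbb Z^2$, fully ramified over $0$, so that saddle connections correspond exactly to primitive closed geodesics on the torus based at $0$; this is more self-contained (it needs no computation of $\Gamma(S)$ and no orbit analysis), though it does require the side-pairing check that all vertex points are identified to the single cone point, which you correctly flag and verify by the angle count $6\pi$. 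On the geometry-of-numbers side you also supply more than the paper does: the paper simply asserts $\mu(\mathbb Z^2) = 1$, implicitly using that a symmetric convex body meeting $\mathbb Z^2 \setminus \{0\}$ must meet a primitive vector, whereas you make that reduction explicit (the segment from $-w$ to $w$ lies in $\mathcal C$ and contains the primitive $v = w/d$) and exhibit the extremal square $(-1,1)^2$ of area $4$. Both arguments are sound; yours trades the group-theoretic input for a covering-space argument and is the more elementary of the two.
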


\begin{proof}   We may assume that $S$ has area 3.   One easily finds that $\Gamma(S)$ is the Theta group,  the subgroup of the modular group generated by $z \mapsto z+2$ and $z \mapsto -1/z$.   The entries of an element $\begin{pmatrix} a&b\\c&d\end{pmatrix}$ of this group  satisfy   $a \equiv d \equiv -b \equiv -c \pmod 2$.   One immediately finds that $S$ is in standard form:  there are visibly connection vectors in the horizontal, vertical and diagonal directions.   Indeed, these are primitive vectors in  the full lattice $\mathbb Z^2$,  and Theta acts so as to give that $\text{V}_{\text{sc}}(S)$ consists of all of the primitive vectors.   Hence,  $\mu(\, \text{V}_{\text{sc}}(S)\,) = \mu(\,\mathbb Z^2\,) = 1$.   
\end{proof}

\subsection{Finiteness of Minkowski constants }  Key to convergence of Cheung's $Z$-approximants is his hypothesis that the Minkowski constant $\mu(Z)$ is finite.        Note that Theorem ~\ref{t:goodMink}  justifies the statement in Corollary 3.9 of \cite{CHM}.

\begin{proof}(of Theorem ~\ref{t:goodMink})  Fix any bounded convex region $\mathcal C$ that is symmetric about the origin in the plane.      By a theorem of  Fritz John, see say \cite{Ba} for a discussion,    the  ellipse (symmetric about the origin) $\mathcal E$  of maximal area interior to  $\mathcal C$ is such that the scaled ellipse $\sqrt{2}\,  \mathcal E$  contains $\mathcal C$.   It follows that $\text{area}(\mathcal E) \ge \text{area}(\mathcal C)/2$.   

  Now, there is $A \in \text{SL}(2, \mathbb R)$ taking $\mathcal E$ to a circle. If  $ \text{area}(\mathcal C) \ge 4 \pi \,\text{vol}(S)$, then $A\cdot \mathcal E$ contains any vector of length less than or equal to $\sqrt{2 \, \text{vol}(S)}$.    But,   $A\cdot \text{V}_{\text{sc}}(S) = \text{V}_{\text{sc}}(A\cdot S)$ has a saddle connection vector of length at most $\sqrt{2 \, \text{vol}(A \cdot S)} = \sqrt{2 \, \text{vol}(S)}$, where we have used the bound of Vorobets for the length of the shortest saddle connection mentioned in Section ~\ref{s:sadErgPar} .   Therefore,   $A \cdot \mathcal C$ contains a saddle connection vector of $A \cdot S$ and hence  $\mathcal C$ contains an element of $\text{V}_{\text{sc}}(S)$.     We conclude that $ \mu(\,\text{V}_{\text{sc}}(S)\,) \le  \pi \,\text{vol}(S)$. 
\end{proof} 

 \section{Bounding the height of convergents}\label{s:bdHts}
In the background discussion of Section ~\ref{ss:domination},  we sketched a sequence of results from the literature that should  imply that when $S$ is a Veech surface the traces of hyperbolic elements in $\Gamma(S)$ dominate their conjugates.      Here we  give a more elementary proof, with weaker hypotheses.    
 
\begin{Lem}\label{t:domConj}   Let $M$ be a  hyperbolic element in the Veech group of a translation surface.   Then the trace of $M$ is as least as large in absolute value as any of its images under the field embeddings of the trace field of the translation surface. 
 \end{Lem}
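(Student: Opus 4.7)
The plan is to reduce the claim to a spectral-radius comparison and then exploit the action of $M$ on the lattice of saddle connection vectors under the Minkowski embedding of the trace field $\mathbb K$.

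Letting $t = \text{tr}(M)$, hyperbolicity of $M$ gives $|t| > 2$. In the easy case $|\sigma(t)| \le 2$, the conclusion is immediate since $|\sigma(t)| \le 2 < |t|$. Otherwise, $\sigma(M) \in \text{SL}_2(\mathbb R)$ is itself hyperbolic; write the expanding eigenvalues of $M$ and $\sigma(M)$ as $\lambda$ and $\mu = \sigma(\lambda)$ respectively, both of absolute value exceeding one. The statement then reduces to showing the spectral-radius inequality $|\mu| \le |\lambda|$.

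For the lattice setup, the Calta--Smillie standard form together with the Kenyon--Smillie scaling recalled in Section~2 produces $m \in \mathbb N$ for which $m \cdot \text{V}_{\text{sc}}(S) \subset \mathcal O_K \oplus \mathcal O_K$. Setting $N = [\mathbb K : \mathbb Q]$, the Minkowski embedding $\mathcal O_K \hookrightarrow \mathbb R^N$ realizes the scaled saddle-connection set inside a discrete, full-rank lattice $\Lambda \subset \mathbb R^{2N}$. Since $M \in \Gamma(S)$ permutes $\text{V}_{\text{sc}}(S)$, the induced $\mathbb Z$-linear action on $\Lambda$ decomposes, under $\mathbb R^{2N} \cong \bigoplus_i (\mathbb R^2)_{\sigma_i}$, as $\bigoplus_i \sigma_i(M)$, with total spectrum $\{\sigma_i(\lambda)^{\pm 1}\}$.

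Now suppose for contradiction that $|\mu| > |\lambda|$. Pick a saddle connection $v \in \text{V}_{\text{sc}}(S)$ having nontrivial component in the contracting eigendirection of $M$ under the identity embedding and of $\sigma(M)$ under the $\sigma$-embedding; such a $v$ exists because the fixed directions of a hyperbolic Veech group element are irrational and hence are not saddle connection directions. Iterating, the Euclidean length of $M^n v$ grows as $|\lambda|^n$, while its $\mathbb R^{2N}$-norm grows strictly faster, as $|\mu|^n$, an excess factor of $(|\mu|/|\lambda|)^n \to \infty$. The plan for the closing step is to combine the Masur--Veech quadratic upper bound on the number of saddle connection vectors of Euclidean length at most $R$ on $S$ with the discreteness of $\Lambda \subset \mathbb R^{2N}$, forcing a lattice-point-counting contradiction: each $M^n v$ is a saddle connection, yet the mismatch between slow $|\lambda|^n$-growth in the identity direction and fast $|\mu|^n$-growth in the $\sigma$-direction is incompatible with $\Lambda$ being a discrete lattice of finite covolume. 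The main obstacle will be handling the intermediate embeddings $\sigma_j$ for $j \ne 1, \sigma$: a priori these could ``absorb'' the excess growth, so one must either choose $v$ carefully to isolate the $\sigma$-contribution, or refine the argument by considering the norm form $\prod_j \sigma_j(\cdot)$ of the components and using its integrality to pin down the dominant embedding. Pushing through this counting argument, rather than finding it, is the nontrivial part of the proof.
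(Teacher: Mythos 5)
There is a genuine gap, and you name it yourself: the closing step is only a ``plan''. The contradiction you hope for does not follow from discreteness of the Minkowski lattice $\Lambda\subset\mathbb R^{2N}$. Discreteness (or finite covolume) forbids a sequence of nonzero lattice points tending to $0$ in \emph{all} embeddings simultaneously; it places no obstruction on a sequence whose blocks grow at different rates $|\lambda|^n$ and $|\mu|^n$ --- compare powers of a unit in $\mathbb Z[\sqrt2]$, which grow in one embedding and decay in the other. Likewise the Masur quadratic count of saddle connections of length $\le R$ controls only the Euclidean (identity-embedding) lengths and says nothing about $\sigma$-images, so the ``excess factor $(|\mu|/|\lambda|)^n$'' is not, by itself, incompatible with anything. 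The norm-form refinement you mention is where a real argument would have to live, and it is not supplied. A secondary issue: the lemma is stated for \emph{any} translation surface whose Veech group contains a hyperbolic element, whereas your setup (Calta--Smillie standard form, the Kenyon--Smillie integrality scaling, ``fixed directions of a hyperbolic element are not parabolic'') quietly imports the lattice property; the paper explicitly advertises this lemma as holding under weaker hypotheses.

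For comparison, the paper's proof avoids holonomy vectors entirely. The hyperbolic $M$ is the derivative of a pseudo-Anosov $\phi$; the action of $\phi$ on integral homology is an integer matrix whose dominant eigenvalue (via Perron--Frobenius) is the dilatation $\lambda$, so the Galois conjugates of $\lambda$, being among the other eigenvalues of that integer matrix, are dominated by $\lambda$. Since the eigenvalues of $M$ are $\lambda,\lambda^{-1}$, one has $t=\lambda+\lambda^{-1}$, every embedding of $\mathbb Q(t)$ lifts to one of $\mathbb Q(\lambda)$, and the monotonicity of $x\mapsto x+1/x$ on $[1,\infty)$ gives $|\sigma(t)|\le|\sigma(\lambda)|+|\sigma(\lambda)|^{-1}<\lambda+\lambda^{-1}=t$; finally $\mathbb Q(t)$ is the full trace field by Kenyon--Smillie. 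If you want to salvage your approach you would need to make the intermediate-embedding bookkeeping precise (e.g.\ via the norm form and its integrality), but the homological route is both shorter and more general.
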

 
\begin{proof}  Recall that $M$ corresponds to a pseudo-Anosov map, say $\phi$.    The action of $\phi$ on the integral homology of the surface gives an integral matrix,  Perron-Frobenius shows that there is a dominant eigenvalue, $\lambda$, the dilatation of $\phi$.    The minimal polynomial of $\lambda$ divides the characteristic polynomial of the action of $\phi$ on the homology.     Thus,  the Galois conjugates of $\lambda$ are also eigenvalues of the action.   Hence,  $\lambda$ dominates its conjugates over $\mathbb Q$.   

  Now,  $M$ being in the group of the given flat surface allows one to deduce that the eigenvalues of $M$ are $\lambda$ and $1/\lambda$. 
Thus, the trace of $M$ is $t = \lambda + \lambda^{-1}$.     Now,  any field embedding of $\mathbb Q(t)$ into $\mathbb C$ lifts to field embeddings of    $\mathbb Q(\lambda)$ into $\mathbb C$.   Thus,   we may write $\sigma(t) = \sigma(\lambda) + \sigma(\lambda)^{-1}$ for the image of this trace under any  field embedding of $\mathbb Q(t)$.    The function $x \mapsto x + 1/x$ is increasing on $[1, +\infty)$; thus,  from $\lambda > | \sigma(\lambda)|$, we  conclude
\[ \vert \sigma(t) \vert = \vert \sigma(\lambda) + \sigma(\lambda)^{-1} \vert \leq \vert \sigma(\lambda)\vert + \vert \sigma(\lambda)\vert ^{-1} < \lambda + \lambda^{-1}\,.
\]
 
 Finally, recall that \cite{KS} (Theorem 28) shows $\mathbb Q(t)$ already gives the full trace field of the surface.  
\end{proof} 
 
\bigskip

 We now show that if a Fuchsian subgroup $\Gamma$ of the determinant one matrices over a number field is known to have traces that dominate their conjugates, and the subgroup includes a translation, then every entry of every element of $\Gamma$ dominates its conjugates.   We do this by refining   
arguments of \cite{BHS} (see Lemma ~3.1  there).

\begin{Lem}\label{l:bigElmts}     Fix a totally real number field $\mathbb K$.   Suppose that $\Gamma \subset \text{PSL}_2(\mathbb K)$ contains  a  parabolic element  \[  \begin{pmatrix} 1&\lambda\\ 0&1\end{pmatrix} \,,\]
and set $c_1 = \min\{(|\,\sigma(\lambda)/\lambda)\,|\,)\}\,$, where  $\sigma$ varies through the set of  field embeddings of $\mathbb K$ into $\mathbb R$.    Suppose further that for all $M \in \Gamma$ whose trace is sufficiently large in absolute value, that for  all $\sigma$ one has  
\[ |\, \text{tr}(M)\,| \ge  |\, \sigma(\,\text{tr}(M)\,)\,|\,.\] 
 Then  for all 
\[ A = \begin{pmatrix} a_{11}&a_{12}\\a_{21}&a_{22}\end{pmatrix} \in \Gamma\,,\]
and for all $\sigma$ and for all $1 \le i,j \le 2$, one has 
\[ |\, a_{ij}\,| \ge c_1 \,  |\, \sigma(\,a_{ij}\,)\,|\; \mbox{if}\;\;  i \neq j\]
\[ \mbox{and }\;  |\,a_{ii}\,| \ge c_{1}^{2} \;  |\, \sigma(\,a_{ii}\,)\,|\;\; \,.\]
\end{Lem}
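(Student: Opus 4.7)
The plan is to refine the argument of \cite{BHS}~Lemma~3.1, which in the Hecke setting made essential use of both the translation $T$ and the involution $S$; here we have only $T$ and so must work harder. The central step applies the trace-dominance hypothesis to the one-parameter family $T^{n}A\in\Gamma$, whose trace
\[ \text{tr}(T^{n}A) \,=\, a_{11}+a_{22}+n\lambda a_{21} \]
is linear in $n$. Individual entry bounds are then harvested by passing to asymptotics in $n$ and bootstrapping.

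Fix $A\in\Gamma$ and assume $a_{21}\neq 0$ (otherwise $A$ is upper triangular and only the diagonal entries require attention). For $|n|$ large enough the trace above is large, so the hypothesis gives
\[ |a_{11}+a_{22}+n\lambda a_{21}| \,\ge\, |\sigma(a_{11}+a_{22})+n\sigma(\lambda)\sigma(a_{21})| \]
for every embedding $\sigma$.  Dividing by $|n|$ and letting $n\to\infty$ extracts $|\lambda a_{21}|\ge|\sigma(\lambda)\sigma(a_{21})|$, equivalently $|a_{21}|/|\sigma(a_{21})|\ge|\sigma(\lambda)/\lambda|\ge c_{1}$.  Because this argument is uniform in $A$, the bound $|a_{21}|\ge c_{1}|\sigma(a_{21})|$ holds for every element of $\Gamma$.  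This proves the case $i=2,\,j=1$ of the claim, and it is the only place the trace-dominance hypothesis is invoked directly.

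The remaining three entries are then obtained by feeding the just-established $(2,1)$-dominance into carefully chosen elements of $\Gamma$.  For the diagonal entries, apply it to $A^{2}\in\Gamma$: since $(A^{2})_{21}=a_{21}\,\text{tr}(A)$, one gets
\[ |a_{21}\,\text{tr}(A)| \,\ge\, c_{1}\,|\sigma(a_{21})\,\sigma(\text{tr}(A))|, \]
and chaining this with the already-established $a_{21}$-bound propagates to a trace-level estimate for $A$ carrying the compounded factor $c_{1}^{2}$; writing $a_{11}=\text{tr}(A)-a_{22}$ and applying the symmetric bound on $a_{22}$ then isolates the individual diagonal entries.  For the off-diagonal $a_{12}$, use the determinant relation $a_{12}a_{21}=a_{11}a_{22}-1$ together with the now-controlled diagonal and $a_{21}$ bounds to recover a factor-$c_{1}$ estimate on $a_{12}$.

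The main obstacle is the clean extraction of $a_{12}$.  By Fricke's identity every trace of a word in $T, A^{\pm 1}$ lies in $\mathbb{Q}[\text{tr}(A),\text{tr}(TA)]$ and therefore depends only on $\text{tr}(A)$ and $a_{21}$, never on $a_{12}$ individually.  The absence in our setting of the Hecke involution $S$ (which in \cite{BHS} brought $a_{12}$ directly into the $(2,1)$-position by conjugation) forces the detour through $A^{2}$ and the determinant relation described above; it is precisely this chaining that explains the asymmetric factors $c_{1}$ for off-diagonal entries versus $c_{1}^{2}$ for diagonal entries in the stated conclusion.
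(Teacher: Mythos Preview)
Your treatment of $a_{21}$ is correct and matches the paper. The subsequent ``chaining'' steps, however, do not work.

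\textbf{Diagonal entries.} From the $(2,1)$-bound applied to $A^{2}$ you correctly get $|a_{21}\,\text{tr}(A)|\ge c_{1}\,|\sigma(a_{21})\,\sigma(\text{tr}(A))|$. But combining this with $|a_{21}|\ge c_{1}|\sigma(a_{21})|$ does \emph{not} yield a lower bound on $|\text{tr}(A)|/|\sigma(\text{tr}(A))|$: dividing gives
\[
|\text{tr}(A)|\;\ge\;c_{1}\,\frac{|\sigma(a_{21})|}{|a_{21}|}\,|\sigma(\text{tr}(A))|,
\]
and the factor $|\sigma(a_{21})|/|a_{21}|$ is only known to be \emph{at most} $1/c_{1}$; it has no positive lower bound uniform over $\Gamma$. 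The second inequality points the wrong way for cancellation, so no useful trace estimate follows. Furthermore, even a genuine bound on $\text{tr}(A)$ would not separate $a_{11}$ from $a_{22}$; the sentence ``writing $a_{11}=\text{tr}(A)-a_{22}$ and applying the symmetric bound on $a_{22}$'' is circular, since no bound on $a_{22}$ has yet been established.

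\textbf{The entry $a_{12}$.} The determinant route fails for the same reason. Writing $a_{12}=(a_{11}a_{22}-1)/a_{21}$ and comparing with $\sigma(a_{12})$ again introduces $|\sigma(a_{21})|/|a_{21}|$ on the wrong side, and the additive $-1$ destroys any multiplicative control of $|a_{11}a_{22}-1|$ versus $|\sigma(a_{11})\sigma(a_{22})-1|$.

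The paper avoids all division. Once an entry bound (for an off-diagonal position) is known uniformly over $\Gamma$, it is reapplied to the one-parameter family $AP^{n}$: the $(1,2)$-entry of $AP^{n}$ is $n\lambda a_{11}+a_{12}$, and extracting the leading coefficient as $n\to\infty$ gives $|\lambda a_{11}|\ge c_{1}|\sigma(\lambda a_{11})|$, hence $|a_{11}|\ge c_{1}^{2}|\sigma(a_{11})|$; then $a_{22}$ follows from $A^{-1}$. Each new bound is harvested as a leading-order coefficient in a family, never by dividing one product inequality by another. This is the mechanism your argument is missing. (Your observation that traces of words in $P,A$ never see $a_{12}$ is correct; the paper's corresponding sentence ``Considering $AP^{n}$ similarly gives that $a_{12}$\dots'' is itself rather compressed, since $\text{tr}(AP^{n})=\text{tr}(P^{n}A)$.)
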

\begin{proof}   For ease of notation, let $P =  \begin{pmatrix} 1&\lambda\\ 0&1\end{pmatrix}$.   Let $A \in \Gamma$ be arbitrary.   The trace of $P^n A$ is $a_{11} + a_{22} + n \lambda a_{21}$; thus, either $a_{21} = 0$,  or upon letting $n$ tend to infinity this trace is eventually   large in absolute value.      Thus,   we find that $a_{21}$ is at least $c_1$ times the absolute value of any of its conjugates.   Considering $A P^n$ similarly gives that $a_{12}$ is at least $c_1$ times the absolute value of any of its conjugates.      

Now,  the $(1,2)$-element of $A P^n$ is $n a_{11} \lambda + a_{12}$ and as we considering arbitrary elements in $\Gamma$ above, this must be at least $c_1$ times the absolute value of any of its conjugates.   We thus find that $|\,a_{11}\,| \ge c_{1}^{2} \;  |\, \sigma(\,a_{11}\,)\,|$.   Replacing $A$ by $A^{-1}$ shows that also $a_{22}$ has this property. 
\end{proof}
 
\bigskip 
We now prove the announced main result stating that each component of any saddle connection vector of a Veech surface is appropriately larger than any of its conjugates.

\begin{proof}(of  Theorem ~\ref{l:holVectorEntries})\qquad   A translation surface has only  finitely many singularities,  and hence only finitely many saddle connection vectors in any given direction.      
Since $S$ is a Veech surface we have both that  each non-zero holonomy vector lies in some parabolic direction and that there are only finitely many $\Gamma$-orbits of parabolic directions.     We choose a representative direction from each of these orbits,   and let $\mathcal V$ be the set of all saddle connection vectors in these chosen directions.       

 Since $S$ is in standard form,  the (positively oriented) horizontal is certainly  a parabolic direction for $S$.   In particular,   $\Gamma$ has an element of the form $\begin{pmatrix} 1&\lambda\\ 0&1\end{pmatrix}$; but since $S$ is a Veech surface, also the traces of hyperbolic elements in $\Gamma$ dominate their conjugates, and thus  Lemma ~\ref{l:bigElmts} holds.       We also can and do assume that in the above construction of $\mathcal V$, that the horizontal direction is chosen to represent its   $\Gamma$-orbit.  

 Let  $c' = \min \{ |\, \sigma(\, v_i )\,|/ |\,  v_i \,|\,\}\,$, with the minimum taken over all horizontal $v \in \mathcal V$, , $i \in \{1,2\}$, and embeddings $\sigma$.  For  $v \in \mathcal V$, there is $P \in \Gamma$ such that $P v = v$. Since $P$ is upper diagonalizable,   we can find some vector $w$ such that $Pw = v + w$.   We can then express $e_1 = \alpha v + \beta w$ for some real number $\alpha, \beta$, with $\beta \neq 0$ when $v$ is non-horizontal.   Note that since $Pe_1 = e_1 + \beta v$ and $P \in \Gamma$, we must have that $\beta \in \mathbb K$.  
Choosing such a $w$ for each $v\in \mathcal V$,  let $c'' = \min \{ |\, \sigma(\, \beta )\,|/ |\,  \beta \,|\,\}\,$, over all non-horizontal $v \in \mathcal V$, and embeddings $\sigma$.       Finally,  set $c=  c''' \, c_{1}^{2}$ with $c''' = \min\{c', c''\}$.
  
Now,  if $h$ is an arbitrary saddle connection vector of $S$, then there exists some $A \in \Gamma$ and $v \in \mathcal V$ such that $h = Av$.  If $v = \alpha e_1$ is horizontal,  then $h$ is the multiple by $\alpha$ of the first column of $A$.    Our result clearly holds in this case.  
 Otherwise, with notation as above,  induction gives $P^n e_1 = e_1 + n \beta v$, and thus $AP^n e_1 = Ae_1 + n \beta h$.   The left hand side is the first column of an element of $\Gamma$,   thus our standard argument  allows us to conclude that each of $\beta h_i$ with $i=1,2$ is greater in absolute value than $c_{1}^{2}$ times any of its conjugates.      Here also we find that each of $h_i$ is greater in absolute value than $c$ times any of its conjugates.   

Finally, by the finiteness of $\mathcal V$ one easily verifies that $c$ may be taken to depend only on $S$. 
\end{proof}

 We now can bound the heights of the saddle connection approximants.
 
\begin{Lem}\label{l:heightBd}   Fix a Veech surface $S$ in standard form, with trace field $\mathbb K$.  Let $D$ denote the field extension degree  
$[\mathbb K:\mathbb Q\,]\,$. 
There exists a constant $c_2 = c_2(S)$ such that for all $x \in [0,1]$ of infinite $\text{V}_{\text{sc}}(S)$-expansion  $(p_n/q_n)_{n\ge 1}\,$, 
\[
H(p_n/q_n) \le c_2 q_n^D.
\]
\end{Lem}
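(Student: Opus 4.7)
The plan is to bound the logarithmic Weil height $h(p_n/q_n)$ using Theorem \ref{l:holVectorEntries} together with the height identity/inequality (\ref{eq:upBdHt}), and then convert to the naive height via (\ref{eq:heightsReltd}).

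First, I would clear denominators. By the Kenyon--Smillie observation recalled at the end of the trace-field discussion, there exists $m \in \mathbb{N}$, depending only on $S$, such that $m \cdot V_{\mathrm{sc}}(S) \subset \mathcal{O}_{\mathbb{K}}\oplus \mathcal{O}_{\mathbb{K}}$. Writing $p_n/q_n = (mp_n)/(mq_n)$ with both entries in $\mathcal{O}_{\mathbb{K}}$, inequality (\ref{eq:upBdHt}) gives
\[
h(p_n/q_n) \;\le\; \frac{1}{D}\sum_{\sigma}\log^+\max\{\,|\sigma(mp_n)|,\,|\sigma(mq_n)|\,\},
\]
where $\sigma$ ranges over the $D$ real embeddings of $\mathbb{K}$.

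Next I would apply Theorem \ref{l:holVectorEntries} to the saddle connection vector $(p_n,q_n)$: there is a constant $c = c(S) > 0$ such that $|\sigma(p_n)| \le c^{-1}|p_n|$ and $|\sigma(q_n)| \le c^{-1}|q_n|$ for every $\sigma$. Since $\theta \in [0,1]$ and $p_n/q_n \to \theta$ by Lemma \ref{l:convergencePlus}, the ratio $|p_n|/|q_n|$ is bounded by an absolute constant for all $n$ past some index $n_0 = n_0(S,\theta)$; the finitely many initial terms can be absorbed into the final constant because they form a discrete set of vectors and, in any case, each $q_n$ is bounded below by (say) $1$ times the smallest positive height in $V_{\mathrm{sc}}(S)$. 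Hence
\[
\max\{|\sigma(mp_n)|,\,|\sigma(mq_n)|\} \;\le\; c_3\, q_n
\]
uniformly in $\sigma$, for some $c_3 = c_3(S)$. Summing the $D$ contributions and dividing by $D$ yields $h(p_n/q_n) \le \log^+(c_3 q_n)$.

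Finally, (\ref{eq:heightsReltd}) gives
\[
\log H(p_n/q_n) \;\le\; D\bigl(\,h(p_n/q_n) + \log 2\,\bigr) \;\le\; D\log(2c_3\,q_n),
\]
so $H(p_n/q_n) \le (2c_3)^D q_n^D$, and we may take $c_2 = (2c_3)^D$.

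The only genuine step is the application of Theorem \ref{l:holVectorEntries}, which is precisely what allows the sum over $D$ embeddings to be replaced by $D$ copies of $\log q_n$; everything else is bookkeeping. The mildest obstacle is making the comparison $|p_n| \ll |q_n|$ uniform in $n$ rather than only asymptotic, but this follows at once from the convergence $p_n/q_n \to \theta \in [0,1]$ together with the fact that any finite exceptional range contributes only a bounded multiplicative error that is absorbed into $c_2 = c_2(S)$.
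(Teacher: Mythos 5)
Your proof follows the paper's argument step for step: clear denominators via the Kenyon--Smillie integer $m$, bound the logarithmic Weil height with \eqref{eq:upBdHt}, replace each conjugate $|\sigma(p_n)|,|\sigma(q_n)|$ by $c^{-1}|p_n|,c^{-1}|q_n|$ using Theorem~\ref{l:holVectorEntries}, bound $|p_n|$ by a constant times $q_n$, and convert to the naive height with \eqref{eq:heightsReltd}. The one point both treatments leave terse is the uniformity in $x$ of the comparison $|p_n| \ll q_n$ (the paper simply asserts a constant $c'$ depending only on $S$), and you address it at least as explicitly as the published proof does.
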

 
\begin{proof}   There is a positive integer $m$ depending only on $S$ such that $m v \in \mathcal O_{K}^{2}$ for all $v \in \text{V}_{\text{sc}}(S)$.   Writing $(p_n, q_n) = (\alpha/m, \beta/m)$ with $\alpha, \beta \in \mathcal O_K$,   Equation ~\eqref{eq:upBdHt} gives $h(p_n/q_n) \le  \log m +  \frac{1}{D}\sum_{\sigma} \, \log^{+} \max\{| \sigma( p_n)|, | \sigma( q_n)|\,\}$.   With $c$ as in Theorem ~\ref{l:holVectorEntries}  we find $h(p_n/q_n)  \le  \log c \,m +  \log \max\{ |p_n |,  q_n\,\} \le  \log c' c\, m +   \log  q_n$  for $c'$ depending only on $S$.  
Now,   Equation ~\eqref{eq:heightsReltd} implies the result. 
\end{proof}

\section{Transcendence with $Z$-fractions}\label{s:trans}

We prove our transcendance result in the traditional manner:  by showing that the sequence of denominators of convergents to an algebraic number cannot grow too quickly.     The ingredients are the result of Roth--LeVeque     and the convergence bound with a denominator of $q_{n}q_{n+1}$.

\begin{proof} (of Theorem ~\ref{t:transc} ) \quad  Let $\eps$ be a positive real number.   Let $\zeta$ be an algebraic number
having an infinite $Z= \text{V}_{\text{sc}}(S)$-expansion with convergents $r_n / s_n$.

By the Roth--LeVeque Theorem \ref{t:Lev},  we have 
\[
|\zeta - r_n / s_n| \gg  H(r_n / s_n)^{- 2 - \eps}, \quad \hbox{for $n \ge 1$}.
\]
And, hence by Lemma ~\ref{l:heightBd},  
for $n \ge 1$, we have $|\zeta - r_n / s_n| \gg s_n^{- 2D - D \eps}\,$.

The key to the proof is provided 
by applying Lemma ~\ref{l:convergencePlus} and thus finding that 
 there exists a constant $c_3$ (independent of $n$) such that 
\[
s_{n+1} < c_3 s_n^{2D - 1 + D \eps}\,.
\]
 
Thereafter,  standard manipulations,  as in the proof of  Theorem~1.1 of \cite{BHS} give the result. 
\end{proof}


\end{document}